\newtheorem{thmgl} {Theorem}    
\newtheorem{propgl}{Proposition}
\newtheorem{lemgl} {Lemma}
\newtheorem{cornn}{Corollary}
\newtheorem{connn} {Conjecture}
\theoremstyle{definition}
\newtheorem{rem} {Remark} [section]
\newtheorem{rems} [rem]{Remarks}
\newcommand{\mf}{\mathfrak}
\newcommand{\mc}{\mathcal}
\newcommand{\ov}{\overline}
\newcommand{\sm}{\setminus}         
\newcommand{\ot}{\otimes}           
\newcommand{\la}{\langle}
\newcommand{\ra}{\rangle}
\newcommand{\ind}{{\rm ind}}
\newcommand{\Ker}{{\rm Ker}}
\newcommand{\g}{\mf{g}}
\newcommand{\p}{\mf{p}}
\newcommand{\q}{\mf{q}}
\newcommand{\fl}{\mf{l}}
\newcommand{\fu}{\mf{u}} 
\newcommand{\Lie}{{\rm Lie}}
\newcommand{\GL}{{\rm GL}}
\newcommand{\SL}{{\rm SL}}
\newcommand{\e}{\varepsilon}
\begin{document}

\title{A Frobenius splitting and cohomology vanishing for the cotangent bundles of the flag varieties of $\GL_n$}

\begin{abstract}
Let $k$ be an algebraically closed field of characteristic $p>0$, let $G=\GL_n$ be the general linear group over $k$, let $P$ be a parabolic subgroup of $G$, and let $\fu_P$ be the Lie algebra of its unipotent radical.
We show that the Kumar-Lauritzen-Thomsen splitting of the cotangent bundle $G\times^P\fu_P$ of $G/P$ has top degree $(p-1)\dim(G/P)$. The component of that degree is therefore given by the $(p-1)$-th power of a function $f$. We give a formula for $f$ and deduce that it vanishes on the exceptional locus of the resolution $G\times^P\fu_P\to\ov{\mc O}$ where $\ov{\mc O}$ is the closure of the Richardson orbit of $P$. As a consequence we obtain that the higher cohomology groups of a line bundle on $G\times^P\fu_P$ associated to a dominant weight are zero. The splitting of $G\times^P\fu_P$ given by $f^{p-1}$ can be seen as a generalisation of the Mehta-Van der Kallen splitting of $G\times^B\fu$.
\end{abstract}

\author[R.~Tange]{Rudolf Tange}
\address
{School of Mathematics,
University of Leeds,
LS2 9JT, Leeds, UK}
\email{R.H.Tange@leeds.ac.uk}

\keywords{cohomology, cotangent bundle, Frobenius splitting}
\subjclass[2020]{14F17,14M15,14L30}
\maketitle
\markright{\MakeUppercase{Frobenius splitting and Cohomology vanishing for $T^\vee(\GL_n/P)$}}

\section*{Introduction}
Let $G$ be a reductive group over an algebraically closed field $k$ of positive characteristic $p$. For a parabolic $P$ containing the positive Borel and $P$-module $M$, we denote by $H^i(G/P,M)$ the $i$-th chomology group of the sheaf $\mc L_{G/P}(M)$ on $G/P$ associated to $M$. It is an open problem whether we have for all parabolic subgroups $P$ and all dominant characters $\lambda$ of $P$ that
$$H^i(G/P,S(\fu_P^*)\ot k_{-\lambda})=0\text{\quad for all\ }i>0\,,\eqno{(*)}$$
where the most important case is $\lambda=0$, see e.g. \cite[Introduction to Ch~5]{BriKu}.
In characteristic 0 this is an easy consequence of the Grauert-Riemenschneider Theorem, see \cite[Thm~2.2]{Broer2}.
In characteristic $p$ (*) is known for $P=B$, for arbitrary $P$ and ``$P$-regular" dominant $\lambda$, see \cite{KLT}, and for $P$ corresponding to sets of pairwise orthogonal short simple roots and $\lambda=0$, see \cite{Th}.

It is easy to write a formula for the Euler character $$\sum_{i\ge0}(-1)^i{\rm ch}\,H^i(G/P,S(\fu_P^*)\ot k_{-\lambda})\,,$$
see \cite[Sect~8.14-8.16]{Jan1} and \cite[Prop~2.1]{Broer1}, so if (8) holds we get a formula for ${\rm ch}\,H^0(G/P,S(\fu^*)\ot k_{-\lambda})$.

When $\mc L_{G/P}(\lambda)=\mc L_{G/P}(k_{-\lambda})$ is ample, i.e. $\lambda$ ``$P$-regular" dominant, one gets (*) from the fact that $G\times^P\fu_P$ is Frobenius split. One can also use Frobenius splittings to prove (*) for $\lambda=0$ via a characteristic $p$-version of of the Grauert-Riemenschneider Theorem \cite[Thm~1.2]{MVdK0}, since the canonical bundle of $G\times^P\fu_P$ is trivial. But then the map from $G\times^P\fu_P$ to the Richardson orbit closure has to be birational and the splitting has to be a $(p-1)$-th power of a section $\sigma$ of the anti-canonical bundle which vanishes on the excptional locus. This is the approach we will follow.

When I asked Thomsen about the case $G=\GL_n$, he told me he expected that the pushforward to $G\times^P\fu_P$ of the splitting of $G\times^B\fu_P$ induced by the ``MVdK-splitting" of $G\times^B\fu$ from \cite{MVdK} is the homogeneous component of degree\break $(p-1)\dim(G/P)$ of the ``KLT-splitting", see Section~\ref{s.splitting}, from \cite{KLT}. Although we can not prove this conjecture, we can show that the above component is in fact the top degree component and therefore a $(p-1)$-th power. From this we can then deduce that this homogeneous splitting vanishes on the exceptional locus of the resolution $\varphi:[g,X]\mapsto gXg^{-1}:G\times^P\fu_P\to\ov{\mc O}$, where $\ov{\mc O}$ is the closure of the Richardson orbit corresponding to $P$, see Theorem~\ref{thm.splitting} in Section~\ref{s.main}. Finally, we then deduce that (*) holds in type $A$, see Theorem~\ref{thm.cohomology}. In fact we can formulate this as a result for arbitrary reductive groups.

The main idea of the proof is as follows. The ``KLT-splitting" from \cite{KLT} is the $(p-1)$-th power of the pullback along $\varphi$ of the function which maps an $n\times n$ matrix $X$ to
\begin{align}\label{eq.splitting}
\prod_{i=1}^{n-1}\det\big((I_n+X)_{\le i,\le i}\big)\,,
\end{align}
where $Y_{\le i,\le i}$ denotes the submatrix of $Y$ given by the first $i$ rows and columns, see \cite[Example~5.1.15]{BriKu}.\footnote{Apart from the degree computation, the arguments there work for any parabolic.}
Unlike in the case $P=B$, the degree of the $i$-th factor may be less than $i$. In Lemma's~\ref{lem.upperbound}(ii)~and~\ref{lem.existence} we determine the degree of the $i$-th factor and from that it follows that the product \eqref{eq.splitting} has degree $\dim(G/P)$.

\section{Preliminaries}\label{s.prelim}
\subsection{Notation}
Let $k$ be an algebraically closed field of characteristic $p>0$ and let $G$ be a reductive group over $k$. We fix a Borel subgroup $B\le G$ and maximal torus $T\le B$. We denote by $R$ the set of roots of $T$ in the Lie algebra $\g=\Lie(G)$ of $G$, and we denote the unipotent radical of $B$ by $U$. We call the roots of $T$ in $\fu=\Lie(U)$ positive and we denote the corresponding set of simple roots by $S$. For a subset $I$ of $S$ we denote the root system spanned by $I$ by $R_I$. Furthermore, we denote the corresponding parabolic subgroup containing $B$ and its Levi subgroup containing $T$ by $P_I$ and $L_I$. Denote the character group of an algebraic group $H$ by $X(H)$. For $I\subseteq S$ we identify $X(P_I)$ and $X(L_I)$ with $\{\lambda\in X(T)\,|\,\la\lambda,\alpha^\vee\ra=0\text{\ for all\ }\alpha\in I\}$.

For $P$ a parabolic of $G$ and $M$ a $P$-module we write $\mc L(M)$ for the $G$-linearised sheaf on $G/P$ associated to $M$.
For $\lambda\in X(P)\le X(T)$ we put $\mc L(\lambda)=\mc L(k_{-\lambda})$, it is the sheaf of sections of the line bundle $G\times^Pk_{-\lambda}$ on $G/P$. We use the same symbol $\mc L(\lambda)$ to denote the sheaf of sections of the pullback of this line bundle to $G\times^PV$ for any $P$-variety $V$. We also write $H^i(G/P,M)$ for
$$H^i(G/P,\mc L(M))\simeq R^i\ind_P^G(M)\,,$$ see \cite[I.5.12]{Jan}.
We have that $$H^i(G\times^P\fu_P,\mc L(\lambda))=H^i(G/P,k[\fu_P]\ot k_{-\lambda})\,,$$
see \cite[Lem~5.2.2]{BriKu}.

If $p={\rm char}\,k$ is good for $G$, then we have $\g/\p\simeq\fu_P^*$ as $P$-modules and $G\times^P\fu_P$ is the cotangent bundle $T^\vee(G/P)$ of $G/P$, see \cite[5.1.8-11]{BriKu}.

\subsection{Frobenius splittings}\label{s.splitting}
By \cite[Lem~5.1.1]{BriKu} the canonical bundle of $G\times^P\fu_P$ is trivial, so we can choose a nowhere zero global section: a volume form. It is easy to see that such a section is unique up to a scalar multiple, see \cite[5.1.2]{BriKu}. This means that we can think of Frobenius splittings (up to a scalar multiple) of $G\times^P\fu_P$ as certain regular functions on $G\times^P\fu_P$.

In \cite[Thm~1]{KLT} it was proved that, when $p$ is good for $G$, the cotangent bundle $T^\vee(G/P)$ of $G/P$ is Frobenius split, see also \cite[Thm 5.1.3]{BriKu}. We will refer to the $B$-canonical splitting $\psi_P(f_-\ot f_+)$ as the ``KLT-splitting" of $T^\vee(G/P)$, where $\psi_P,f_-,f_+$ are as defined in \cite[Ch~5]{BriKu}. Actually this is only a splitting up to a scalar multiple, but in the case $G=\GL_n$ we assume that the chosen volume form on $T^\vee(G/P)$ is such that the pullback along $\varphi$ of the function given by \eqref{eq.splitting}, $\varphi$ defined as in the introduction, defines a splitting. That formula is all we need to know about the KLT-splitting in this paper.

The standard grading of $k[\fu_P]=S(\fu_P^*)$ gives a grading on $k[G\times^P\fu_P]$, and in \cite[5.1.14]{BriKu} it is explained that the homogeneous component of degree $(p-1)\dim(G/P)$ of a splitting $\sigma$ of $G\times^P\fu_P$ is again a splitting of $G\times^P\fu_P$. This component is $B$-canonical if $\sigma$ is $B$-canonical.

\subsection{A result on cohomology vanishing}
The following result is probably well-known, see e.g. \cite[Sect~7.2]{BNPP}, but for lack of reference we give a proof.
\begin{propgl}\label{prop.reduction_to_lambda=0}
Asume $p$ is good for $G$, let $P$ be a parabolic of $G$, let $\lambda\in X(P)$ be dominant, let $Q$ be the parabolic of $G$ containing $P$ such that $\lambda\in X(Q)$ and $\mc L_{G/Q}(\lambda)$ is ample, and let $L$ be the Levi subgroup of $Q$ containing $T$.
If $H^i(L/L\cap P,S(\fl/\fl\cap\p))=0$ for all $i>0$, then $H^i(G/P,S(\g/\p)\ot k_{-\lambda})$ for all $i>0$.
\end{propgl}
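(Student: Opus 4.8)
The plan is to reduce the statement for general dominant $\lambda$ to the case $\lambda=0$ on a smaller flag variety, namely $L/L\cap P$, by using the parabolic $Q$ with $P\le Q\le G$ attached to $\lambda$. The key geometric input is the fibre bundle structure $G/P\to G/Q$ with fibre $Q/P$, and the fact that, since $\lambda\in X(Q)$, the line bundle $\mc L_{G/P}(\lambda)$ is the pullback of the ample line bundle $\mc L_{G/Q}(\lambda)$ along this projection. So I would first set up the Leray spectral sequence (or rather the composition-of-derived-functors identity $R^i\ind_P^G=R^i\ind_Q^G\circ R^j\ind_P^Q$ in the form of a Grothendieck spectral sequence) for the map $G/P\to G/Q$, applied to the sheaf associated to $S(\g/\p)\ot k_{-\lambda}$.

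The main steps, in order: (1) Identify the $Q$-module $R^j\ind_P^Q(S(\g/\p)\ot k_{-\lambda})$. Since $\lambda$ is a $Q$-character, by the projection formula this is $R^j\ind_P^Q(S(\g/\p))\ot k_{-\lambda}$, i.e. $H^j(Q/P,S(\g/\p))\ot k_{-\lambda}$ as a $Q$-module. (2) Show the fibre cohomology $H^j(Q/P,S(\g/\p))$ vanishes for $j>0$ and compute $H^0$. Here one uses the $Q$-stable (even $L$-stable, modulo the unipotent radical of $Q$) filtration of $\g/\p$ by $P$-submodules coming from $\fl/\fl\cap\p\subseteq\g/\p$ with quotient $\g/\q$, on which $Q$ acts — in fact $\g/\q$ is a $Q$-module, so $S(\g/\q)$ pulls back from a point and contributes only in degree $0$. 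By the hypothesis $H^i(L/L\cap P,S(\fl/\fl\cap\p))=0$ for $i>0$, together with the fact that $Q/P\cong L/(L\cap P)$ and that the action of the unipotent radical of $Q$ on the fibre cohomology is unipotent (hence does not affect vanishing, via a filtration/long exact sequence argument on the graded pieces of $S(\g/\p)$), one gets $H^j(Q/P,S(\g/\p))=0$ for $j>0$ and $H^0(Q/P,S(\g/\p))=H^0(L/L\cap P,S(\fl/\fl\cap\p))\ot S(\g/\q)$ with $Q$ acting appropriately. (3) Feed this into the spectral sequence: it collapses to $H^i(G/P,S(\g/\p)\ot k_{-\lambda})\cong H^i\big(G/Q,\,H^0(Q/P,S(\g/\p))\ot k_{-\lambda}\big)$. (4) Finally, invoke ampleness of $\mc L_{G/Q}(\lambda)$: the module $H^0(Q/P,S(\g/\p))\ot k_{-\lambda}=\bigl(\text{graded, each piece a }Q\text{-module}\bigr)\ot k_{-\lambda}$ has, in each graded degree, the shape (finite-dimensional $Q$-module)$\,\ot\,k_{-\lambda}$, and tensoring an ample line bundle with the sheaf of a finite-dimensional $Q$-module (which is locally free, a successive extension of $\mc L_{G/Q}(\mu)$'s with $\mu$ in the same coset, so still giving ample-enough bundles) has vanishing higher cohomology by Serre vanishing applied degree-by-degree — more precisely one argues on each homogeneous component of the grading, which is a finite extension, and uses that $\mc L_{G/Q}(\lambda-\mu)$ for the relevant $\mu$ still has vanishing higher cohomology because $\lambda$ is "$Q$-regular" dominant. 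That kills $H^i$ for $i>0$.

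The hard part will be Step (2), specifically controlling the contribution of the unipotent radical $\fu_Q$ of $Q$: the module $\g/\p$ is not simply $(\fl/\fl\cap\p)\oplus(\g/\q)$ as a $P$-module, only filtered, and $S(\g/\p)$ inherits a filtration whose graded pieces are $S(\fl/\fl\cap\p)\ot S(\g/\q)$ twisted by the $\fu_Q$-action. One must check that the higher cohomology of each graded piece over $Q/P$ vanishes — which reduces to the hypothesis once one knows $\g/\q$ contributes trivially (it is a genuine $Q$-module, so $\mc L_{Q/P}(S(\g/\q))$ is a trivial bundle on the fibre) — and then assemble the long exact sequences to conclude the same for $S(\g/\p)$ itself. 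A secondary subtlety is verifying that $Q/P\cong L/(L\cap P)$ compatibly with the modules involved, i.e. that the unipotent radical of $Q$ acts trivially on the relevant base $L/(L\cap P)$, which is standard but must be stated. I would also need to be slightly careful that "$p$ good for $G$" is what makes $\g/\p\simeq\fu_P^*$ and the filtration behave well, which is already assumed.
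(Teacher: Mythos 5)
Your steps (1)--(3) match the paper's strategy: pass to the associated graded so that one works with $S(\q/\p)\ot S(\g/\q)$ instead of $S(\g/\p)$, observe $\q/\p\simeq\fl/\fl\cap\p$ with $U_Q$ acting trivially, and use transitivity of induction \cite[I.6.11]{Jan} together with the hypothesis to collapse the Grothendieck spectral sequence for $G/P\to G/Q$, reducing to killing $R^i\ind_Q^G\big(\ind_P^Q S(\q/\p)\ot S(\g/\q)\ot k_{-\lambda}\big)$ for $i>0$.

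Step (4) has a genuine gap. The degree-by-degree Serre-vanishing argument cannot work as stated: the module $\ind_P^Q(S(\q/\p))\ot S(\g/\q)$ has infinitely many homogeneous pieces, the $T$-weights occurring in them are unbounded below, and there is no reason why $H^{>0}(G/Q,-)$ should vanish on all those weight characters tensored with $k_{-\lambda}$. The weights $\lambda-\mu$ you invoke are not all $Q$-dominant, so Kempf-type vanishing does not apply, and Serre vanishing for a fixed coherent sheaf gives a bound depending on the sheaf, hence potentially on the graded degree --- without a coherence input one cannot apply it uniformly across infinitely many degrees. Moreover, the proposition asks for vanishing for the given $\lambda$, not for $m\lambda$ with $m\gg0$, so raw Serre vanishing is not even on the table; some extra mechanism is needed to reduce to high multiples.

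The paper closes exactly these two gaps. First, it invokes the Frobenius splitting of $G\times^P\fu_P$ \cite[Cor~3 to Thm~4]{KLT} and \cite[Lemma~1.2.7(i)]{BriKu} to reduce the statement to proving vanishing for $m\lambda$, $m\gg0$. Second, instead of arguing degree-by-degree, it packages the graded $Q$-module geometrically: by Jantzen's argument \cite[p.94]{Jan1} there is an affine $Q$-variety $V_0$ with $k[V_0]\simeq\ind_P^Q S(\q/\p)$ and a finite $Q$-morphism $V_0\to\ov{Q\cdot\fu_{L\cap P}}$; setting $V=V_0\times\fu_Q$, the cohomology to kill becomes $H^i(G\times^QV,\mc L(\lambda))$. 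Ampleness of $\mc L(\lambda)$ on $G\times^QV$ follows because $G\times^QV\to G/Q$ is affine, and the composite $G\times^QV\to G\times^Q(\ov{Q\cdot\fu_{L\cap P}}\times\fu_Q)\to\g$ is proper (finite followed by projective), so \cite[III.5.3]{H} applies uniformly. Without these two inputs --- the Frobenius-splitting reduction and the proper affine model $V$ --- your argument does not close.
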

\begin{proof}
By \cite[Cor~3 to Thm~4]{KLT} or \cite[Thm~5.3]{BriKu} $G\times^P\fu_P$ is Frobenius split, so by \cite[Lemma~1.2.7(i)]{BriKu} it is enough to show the vanishing for $m\lambda$, $m\gg0$ (in fact we only need it for $p^m\lambda$ and some $m\ge0$).

Some of the arguments below are adaptations of arguments from the proof of \cite[Lem~5.2.7]{BriKu}.

Each $S^j(\g/\p)$ has a filtration with sections $S^r(\q/\p)\ot S^s(\g/\q)$, $r+s=j$, so it is enough to show that $R^i\ind_P^G(S(\q/\p)\ot S(\g/\q)\ot k_{-\lambda})=0$ for all $i>0$.
We have $P=(L\cap P)U_Q$ and $\q/\p\simeq \fl/\fl\cap\p$. Note that $U_Q$ acts trivially on $\q/\p$.
Combining \cite[I.6.11]{Jan} and our assumption with a standard spectral sequence argument, we have
\begin{align}\label{eq.collapsed_SS}
R^i\ind_P^G(S(\q/\p)\ot S(\g/\q)\ot k_{-\lambda})&\simeq R^i\ind_Q^G(\ind_P^QS(\q/\p)\ot S(\g/\q)\ot k_{-\lambda})\,.
\end{align}
We have $\p=\fl\cap\p\oplus\fu_Q$, $\fu_P=\fu_{L\cap P}\oplus\fu_Q$, $(\g/\p)^*\simeq\fu_P$, $(\g/\q)^*\simeq\fu_Q$, and $(\q/\p)^*\simeq\fu_{L\cap P}$.
By the arguments of \cite[p94]{Jan1} there exists an affine $Q$-variety $V_0$ such that $k[V_0]\simeq k[Q\times^P\fu_{L\cap P}]=\ind_P^QS(\q/\p)$, $Q$-equivariantly ($U_Q$ acting trivially).
Put $V=V_0\times\fu_Q$. Then $k[V]=\ind_P^QS(\q/\p)\ot S(\g/\q)$. Now the morphism $G\times^QV\to G/Q$ is affine, so by \cite[Ex~III.8.2]{H} the RHS of \eqref{eq.collapsed_SS} is isomorphic to \begin{align}\label{eq.Gx^QV}
H^i(G\times^QV,\mc L(\lambda))\,.
\end{align}
By \cite[5.1.12]{EGAII} $\mc L(\lambda)$ is ample on $G\times^QV$, since $G\times^QV\to G/Q$ is affine.
The morphism $V_0\to\ov{Q\cdot\fu_{L\cap P}}$ is finite, see \cite[p94]{Jan1}, so the same is true for the morphisms $V\to\ov{Q\cdot\fu_{L\cap P}}\times\fu$ and $G\times^QV\to G\times^Q(\ov{Q\cdot\fu_{L\cap P}}\times\fu_Q)$. So composing the latter with the projective morphism $G\times^Q(\ov{Q\cdot\fu_{L\cap P}}\times\fu_Q)\to\g$, given by the embedding of $\ov{Q\cdot\fu_{L\cap P}}\times\fu_Q$ in $\g$, we obtain a proper morphism $G\times^QV\to\g$.
Now \cite[III.5.3]{H} tells us that \eqref{eq.Gx^QV} is $0$ if we replace $\lambda$ by $m\lambda$, $m\gg0$.
\end{proof}

\section{The main results}\label{s.main}
Throughout this section, except in Theorem~\ref{thm.cohomology} and its proof, $G=\GL_n=\GL(k^n)$ and $T$ is the subgroup of diagonal matrices in $G$. As simple roots we choose the usual characters $\e_i-\e_{i+1}$, $1\le i\le n-1$, where we used additive notation for characters, and $\e_i$ is the $i$-th coordinate function on $T$. Then $B$ consists of the upper triangular matrices in $G$.
As is well-known, the conjugacy classes of parabolic subgroups of $G$ are labelled by the compositions of $n$, see e.g.\cite[3.2]{He}. By $\nu$ we denote a composition of $n$ and $P=P_\nu\supseteq B$ is the standard parabolic whose block sizes are given in order by $\nu$. If $A_\nu$ is the set $\{\nu_1,\nu_1+\nu_2,\ldots,\sum_{j=1}^{s-1}\nu_j\}$, $s$ the length of $\nu$, then $P_\nu=P_{I_\nu}$, the parabolic associated to the set of simple roots $I_\nu=\{\e_i-\e_{i+1}\,|\,i\in \{1,\ldots,n-1\}\sm A_\nu\}$. We denote by $\lambda$ the transposed partition of the weakly descending sorted version of $\nu$. It is well-kown that the Richardson orbit of $P_\nu$ is $\mc O_\lambda$, the nilpotent orbit whose Jordan block sizes are given by $\lambda$, see e.g. \cite[Thm 3.3(a)]{He}. 

It is well-known that the map $\varphi:[g,X]\mapsto gXg^{-1}:G\times^P\fu_P\to\ov{\mc O_\lambda}$ is birational. Indeed the group centraliser $G_X$ of any $X\in\g$ is the set of invertible elements in the Lie algebra centraliser $\g_X$, so is connected. Now see \cite[4.9 and 8.8 Remark]{Jan1}. It is also well-known that $\ov{\mc O_\lambda}$ is normal, see e.g. \cite{Don} or \cite[Sect~4.7]{MVdK}.

For $i\in\{1,\ldots,n-1\}$ we denote by $d_{\lambda,i}$ the number of nonzero positions on the $(n-i)$-th upper codiagonal of $\fu_P$.
So for $\nu=(2,1,2)$ we have $d_{\lambda,1},d_{\lambda,2},d_{\lambda,3},d_{\lambda,4}=1,2,3,2$, see the figure of $\fu_p$ below. $$\left[\begin{smallmatrix}0&0&*&*&*\\0&0&*&*&*\\0&0&0&*&*\\0&0&0&0&0\\0&0&0&0&0\end{smallmatrix}\right]$$
Since each diagonal $j\times j$ block of $P$ takes away $j-i$ nonzero positions from the $i$-th upper codiagonal, we have, if $j$ occurs $m_j$ times in $\nu$,
$$d_{\lambda,n-i}=n-i-\sum _{j>i} (j-i) m_j=n-i-\sum _{j>i}\lambda_j=-\sum _{j=i+1}^n (\lambda_j-1)=\sum _{j=1}^i(\lambda_j-1)\,.$$
Therefore, $d_{\lambda,i}=i-\sum _{j>n-i}\lambda_j=\sum _{j=1}^{n-i}(\lambda_j-1)$.
So indeed the $d_{\lambda,i}$ only depend on $\lambda$, moreover, they determine $\lambda$.

For a square matrix $X$ we denote by $X_{\le i,\le i}$ the submatrix of $X$ given by the first $i$ rows and columns. For an $i\times i$ matrix $Y$ we denote by $s_j(Y)$ the trace of the $j$-th exterior power of $Y$, i.e. the sum of the diagonal $j\times j$ minors of $Y$. As is well-known, $\det(aI_i-Y)=a^i+\sum_{j=1}^i(-1)^ja^{i-j}s_j(Y)$, where $I_i$ is the $i\times i$ identity matrix. So the largest $j$ with $s_j(Y)\ne0$ is the number of nonzero eigenvalues of $Y$, counted with (algebraic) multiplicity. This number also equals the rank of $Y^l$ for $l$ sufficiently big. We will call it the \emph{stable rank} of $Y$.

\begin{lemgl}\label{lem.upperbound}
Let $X\in\mc O_\lambda$.
\begin{enumerate}[{\rm(i)}]
\item Any $i$-dimensional subspace $W$ of $V=k^n$ contains an $X$-invariant subspace $U$ of dimension $\ge\sum_{j>n-i}\lambda_j$.
\item $X_{\le i,\le i}$ has stable rank $\le d_{\lambda,i}$.
\end{enumerate}
\end{lemgl}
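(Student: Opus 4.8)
The plan is to deduce part (ii) from part (i), and to prove part (i) by a direct linear-algebra argument about the Jordan structure of $X$. For part (i), recall that $X\in\mc O_\lambda$ means $X$ is nilpotent with Jordan block sizes $\lambda_1\ge\lambda_2\ge\cdots$. The key observation is that $\dim\Ker(X^m)=\sum_j\min(\lambda_j,m)$, so the nonzero terms $\lambda_j$ with $j>n-i$ are, after sorting, exactly the "tail" of the partition, and $\sum_{j>n-i}\lambda_j$ counts boxes in a certain sub-diagram. Given an $i$-dimensional $W$, I would produce an $X$-invariant $U\subseteq W$ by taking $U=\bigcap_{m\ge0}X^m W'$ for a suitable modification, or more cleanly: consider the descending chain $W\cap\Ker(X^m)$ — actually the cleanest route is to pass to the largest $X$-invariant subspace contained in $W$, namely $U=\sum_{m\ge0}\{v\in W : Xv\in W,\ldots,X^mv\in W\}$; one checks this is $X$-invariant and estimates its dimension by a counting argument comparing the flag $W\supseteq XW\cap W\supseteq\cdots$ against the Jordan filtration. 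An equivalent and perhaps smoother formulation: the codimension of the largest $X$-invariant subspace of $W$ inside $W$ is at most $\codim_V(W) = n-i$ worth of "defect", distributed across Jordan blocks, and a worst-case analysis forces $\dim U\ge\sum_{j>n-i}\lambda_j$. I expect this combinatorial estimate — showing that removing an $(n-i)$-codimensional constraint can destroy at most the "top $n-i$ rows" of the Jordan blocks in the sorted picture — to be the main obstacle, and it is where the precise identity $d_{\lambda,i}=i-\sum_{j>n-i}\lambda_j=\sum_{j=1}^{n-i}(\lambda_j-1)$ gets used.

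For part (ii), I would argue as follows. Let $W=k^i\subseteq V=k^n$ be the span of the first $i$ standard basis vectors, so $X_{\le i,\le i}$ is the matrix of the composite $W\hookrightarrow V\xrightarrow{X}V\xrightarrow{\pi}W$ where $\pi$ is the projection killing the last $n-i$ coordinates. The stable rank of $X_{\le i,\le i}$ equals the rank of $(X_{\le i,\le i})^l$ for $l\gg0$; I want to show this is $\le d_{\lambda,i}=i-\sum_{j>n-i}\lambda_j$. By part (i), $W$ contains an $X$-invariant subspace $U$ with $\dim U\ge\sum_{j>n-i}\lambda_j=:c$. On $U$, the operator $X$ restricts to a nilpotent operator (as $X$ is nilpotent on all of $V$), hence $X|_U$ is nilpotent, and moreover $\pi|_U=\id_U$ since $U\subseteq W$; therefore $X_{\le i,\le i}$ restricted to $U$ agrees with $X|_U$, which is nilpotent. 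Thus $X_{\le i,\le i}$ has a $c$-dimensional invariant subspace on which it acts nilpotently, so its stable rank — the number of nonzero eigenvalues with multiplicity, equivalently $\dim W - (\text{dimension of the sum of generalized }0\text{-eigenspaces})$ — is at most $\dim W - c = i - c = d_{\lambda,i}$. This gives (ii).

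The one point needing care in the (ii) step is the claim that stable rank is subadditive in the way used: precisely, if an operator $Y$ on a space $W$ has an invariant subspace $U$ on which $Y$ acts nilpotently, then $Y$'s generalized $0$-eigenspace contains $U$, so $\operatorname{stablerank}(Y)\le\dim W-\dim U$. This is immediate from the definition of stable rank as $\dim W-\dim(\text{generalized }0\text{-eigenspace})$, which matches the description "rank of $Y^l$ for $l\gg0$" given in the text. So the real content is entirely in part (i), and I would spend the bulk of the write-up there; part (ii) is then a short deduction. As a sanity check I would verify the formula against the displayed example $\nu=(2,1,2)$: there $\lambda=(3,2)$ after transposing the sorted composition $(2,2,1)$, and one recomputes $d_{\lambda,1},\ldots,d_{\lambda,4}=1,2,3,2$ matching the figure, which also pins down the meaning of "first $i$ rows and columns" relative to the block structure of $\fu_P$.
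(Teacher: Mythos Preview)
Your deduction of (ii) from (i) is correct and essentially matches the paper's argument: both observe that on an $X$-invariant $U\subseteq k^i$ the compression $X_{\le i,\le i}$ agrees with the nilpotent $X|_U$, so the stable rank is at most $i-\dim U\le d_{\lambda,i}$.

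The gap is in part (i). You list several candidate constructions (the largest $X$-invariant subspace of $W$, a flag comparison, a ``worst-case analysis'') but carry none of them through, and you yourself flag the required dimension estimate as ``the main obstacle''. None of these sketches supplies the actual combinatorial input, and it is not clear that the direct approach via the maximal invariant subspace leads anywhere without further ideas. The paper's argument is a short induction on $n$. The case $i\le n-r$ (with $r$ the length of $\lambda$, i.e.\ $r=\dim\Ker X$) is trivial since then $\sum_{j>n-i}\lambda_j=0$. When $i>n-r$ one has $\dim W+\dim\Ker X>n$, so there is a nonzero $v\in W\cap\Ker X$. Quotienting by $kv$ gives an operator $\ov X$ on $V/kv$ whose partition $\mu$ is obtained from $\lambda$ by subtracting $1$ from a single part, and one applies the induction hypothesis to the $(i-1)$-dimensional $W/kv\subseteq V/kv$ (same value of $n-i$) to get an $\ov X$-invariant subspace of dimension at least $\sum_{j>n-i}\mu_j\ge\sum_{j>n-i}\lambda_j-1$; its preimage in $W$ is the desired $U$. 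This inductive step via a vector in $W\cap\Ker X$ is the missing idea in your proposal.
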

\begin{proof}
(i).\ We show this by induction on $n$. It is trivial when $i\le n-r$, $r$ the length of $\lambda$, in particular when $n=0$. Assume $i>n-r$. Then $W$ has nonzero intersection with $\Ker(X)$ for dimension reasons. Pick $v$ nonzero in that intersection. First note that the transformation $\ov X$ induced on $V/kv$ by $X$ has partition $\mu$ which is obtained from $\lambda$ by subtracting 1 from one part of $\lambda$ and then sorting the result in weakly descending order. Indeed if we decompose $V$ as a direct sum of $X$-Jordan blocks and we pick a $X$-Jordan block of minimal size with the property that $v$ has nonzero component in it, then we can replace that $X$-Jordan block by an $X$-Jordan block of the same size which contains $v$. 
Now we apply the induction hypothesis to $V/kv$ and $W/kv$, noting that $(n-1)-(i-1)=n-i$, to obtain an $\ov X$-invariant subspace $U/kv$ of $W/kv$ of dimension $\ge\sum_{j>n-i}\mu_j\ge\sum_{j>n-i}\lambda_j-1$. Now $U$ is the $X$-invariant subspace we want.\\
(ii).\ The linear map $(X_{\le i,\le i})^i$ coincides with $X^i$ on any $X$-invariant subspace $U$ of $k^i\le k^n$ and therefore kills it. Choosing $U$ as in (i), it induces a linear map $k^i/U \to k^i$ and therefore has rank $\le i - \sum_{j>n-i}\lambda_j = d_{\lambda,i}$.
\end{proof}

Lemma~\ref{lem.existence} below follows from Lemma~\ref{lem.upperbound}(ii) and the existence of the KLT-splitting, but we prefer to give a direct proof.

\begin{lemgl}\label{lem.rank}
For any $h\in\{1,\ldots,i-1\}$ there exists a regular nilpotent $i\times i$ matrix $X$ such that $X_{\le h,\le h}$ is invertible.
\end{lemgl}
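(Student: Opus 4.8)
The plan is to produce such an $X$ by hand. Set $V_1=\langle e_1,\dots,e_h\rangle$ and $V_2=\langle e_{h+1},\dots,e_i\rangle$. The observation that guides the construction is that $X_{\le h,\le h}$ is invertible precisely when $X(V_1)$ is a vector space complement of $V_2$ in $k^i$ (equivalently, $X|_{V_1}$ is injective and $X(V_1)\cap V_2=0$). In particular $X$ must carry $V_1$ well away from itself, so $X$ cannot be a mild perturbation of the standard regular nilpotent $N$, for which $N(V_1)\subseteq V_1$; indeed the obvious repairs, such as adding to $N$ a nonzero entry in the lower-left corner of its top $h\times h$ block, destroy nilpotency (they introduce an eigenvalue that is an $h$-th root of unity). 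So one has to choose $X$ so that its single Jordan chain runs once through $V_1$ and then escapes cleanly into $V_2$ and dies there.

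Concretely, I would change basis as follows: put $v_j=e_j$ for $j\neq h+1$ and $v_{h+1}=e_1+e_{h+1}$. This is a basis of $k^i$ because the transition matrix is unipotent upper triangular. Define $X$ to be the endomorphism of $k^i$ with $Xv_j=v_{j+1}$ for $1\le j\le i-1$ and $Xv_i=0$. Then $v_1$ is a cyclic vector, $X^{i-1}v_1=v_i\neq 0$ and $X^i=0$ (since $X^iv_j=X^{j-1}(X^iv_1)=0$), so $X$ consists of a single Jordan block of size $i$; that is, $X$ is a regular nilpotent $i\times i$ matrix.

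It remains to read off the top-left block. For $1\le j\le h-1$ we have $e_j=v_j$ and $e_{j+1}=v_{j+1}$, hence $Xe_j=e_{j+1}$; and $Xe_h=Xv_h=v_{h+1}=e_1+e_{h+1}$. Projecting onto the first $h$ coordinates, the columns of $X_{\le h,\le h}$ are $e_2,e_3,\dots,e_h,e_1$, so $X_{\le h,\le h}$ is the permutation matrix of the $h$-cycle $(1\,2\,\cdots\,h)$, which is invertible over any field, and the argument is uniform in $h\in\{1,\dots,i-1\}$. I do not expect a real obstacle here: the only idea needed is the rank-one shear $v_{h+1}=e_1+e_{h+1}$, whose $e_1$-part closes up the cyclic permutation inside the top $h\times h$ block while its $e_{h+1}$-part lets the Jordan chain leave $V_1$; once that choice is made, every verification above is a one-line computation.
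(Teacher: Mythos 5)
Your proof is correct and is essentially the same construction as the paper's: both produce a single Jordan chain by replacing one standard basis vector with $e_1+e_{h+1}$ and taking a shift in the new basis (you use an up-shift in the modified basis; the paper writes the same idea as a down-shift with $X(e_2)=e_1+e_{h+1}$ and $X(e_1+e_{h+1})=0$), and in both cases $X_{\le h,\le h}$ becomes a (signed) cyclic permutation matrix.
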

\begin{proof}
Let $(e_1,\ldots,e_i)$ be the standard basis of $k^i$. Then the regular nilpotent matrix $X$ given by $X(e_j)=e_{j-1}$ for $2<j\le i$, $X(e_2)=e_1+e_{h+1}$ and $X(e_1+e_{h+1})=0$ has the desired property. 
\end{proof}

\begin{rem}
Of course it follows from Lemma~\ref{lem.rank} that there exsists a regular $i\times i$ matrix $X$ such that $X_{\le h,\le h}$ is invertible for all $h\in\{1,\ldots,i-1\}$, but we won't need this.
\end{rem}

\begin{lemgl}\label{lem.existence}
There exists $X\in\mc O_\lambda$ such that $X_{\le i,\le i}$ has stable rank $d_{\lambda,i}$.
\end{lemgl}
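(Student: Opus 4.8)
The plan is to write down $X$ explicitly as a block‑diagonal operator $X=\bigoplus_{j=1}^{r}X^{(j)}$, where $r$ is the number of parts of $\lambda$, each $X^{(j)}$ is a regular nilpotent operator on a \emph{coordinate} subspace $V_j=\langle e_a : a\in S_j\rangle$ of $k^n$, and $S_1,\dots,S_r$ is a partition of $\{1,\dots,n\}$ with $|S_j|=\lambda_j$. Such an $X$ is nilpotent of Jordan type $(\lambda_1,\dots,\lambda_r)=\lambda$, so $X\in\mc O_\lambda$. The point of taking the $V_j$ to be coordinate subspaces is that then both $X$ and the truncation maps respect the decomposition $k^n=\bigoplus_jV_j$: writing $W=\langle e_1,\dots,e_i\rangle$, the operator $X_{\le i,\le i}$ (the composite $W\hookrightarrow k^n\xrightarrow{\,X\,}k^n\to W$ of inclusion, $X$, and coordinate projection) preserves each $W\cap V_j$ and acts on it as the coordinate projection of $X^{(j)}|_{W\cap V_j}$ onto $W\cap V_j$. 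Hence the stable rank of $X_{\le i,\le i}$ is the sum over $j$ of the stable ranks of these pieces, and the task reduces to choosing the $S_j$ and the $X^{(j)}$ so that this sum equals $d_{\lambda,i}$.

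Put $m=n-i$ and, for a partition $(S_j)$, let $a_j=|S_j\cap\{1,\dots,i\}|$, so $\sum_ja_j=i$ and, ordering $S_j$ increasingly, $W\cap V_j$ is the span of the first $a_j$ basis vectors of $V_j$. I would first fix the numbers $a_j$: using $\sum_j\lambda_j=n$ and $\lambda_j\ge1$ one checks that there is a choice with $0\le a_j\le\lambda_j$, $\sum_ja_j=i$, $a_j=\lambda_j$ for every $j>m$, and $a_j<\lambda_j$ for every $j\le m$; and then, using the formulas $d_{\lambda,i}=i-\sum_{j>n-i}\lambda_j=\sum_{j=1}^{n-i}(\lambda_j-1)$ recorded before Lemma~\ref{lem.upperbound}, one gets $\sum_{j:\,a_j<\lambda_j}a_j=\sum_{j\le m}a_j=i-\sum_{j>m}\lambda_j=d_{\lambda,i}$. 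Next choose any partition $(S_j)$ realising these $a_j$, and for each $j$ define $X^{(j)}$ on $V_j$ as follows: if $1\le a_j\le\lambda_j-1$, apply Lemma~\ref{lem.rank} with $\lambda_j$ in place of $i$ and $a_j$ in place of $h$ to get a regular nilpotent $X^{(j)}$ whose leading $a_j\times a_j$ submatrix — i.e.\ the coordinate projection of $X^{(j)}|_{W\cap V_j}$ onto $W\cap V_j$ — is invertible; if $a_j\in\{0,\lambda_j\}$, let $X^{(j)}$ be any regular nilpotent on $V_j$. With $X=\bigoplus_jX^{(j)}$ the $j$-th piece of $X_{\le i,\le i}$ has stable rank $a_j$ when $1\le a_j\le\lambda_j-1$ (it is invertible) and stable rank $0$ when $a_j\in\{0,\lambda_j\}$ (it is then trivial, or equal to the nilpotent operator $X^{(j)}$, respectively). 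Summing, $X_{\le i,\le i}$ has stable rank $\sum_{j:\,a_j<\lambda_j}a_j=d_{\lambda,i}$.

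The arithmetic — that the $a_j$ can be chosen as claimed and that the displayed chain of equalities holds — and the fact that $X$ has Jordan type $\lambda$ are routine. The step that needs care, and that I regard as the crux, is the block-diagonal description of $X_{\le i,\le i}$ in the first paragraph: it genuinely uses that the $V_j$ are coordinate subspaces, so that the $S_j$ need not consist of consecutive indices. Note also that Lemma~\ref{lem.rank} is invoked precisely to make each non-full block $X^{(j)}$ contribute its full $a_j$ to the stable rank, and that by Lemma~\ref{lem.upperbound}(ii) this sum can never exceed $d_{\lambda,i}$, so the choice above is optimal.
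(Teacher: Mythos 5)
Your proof is correct and takes essentially the same route as the paper's: both build a nilpotent $X$ of type $\lambda$ whose matrix is block diagonal in such a way that the truncation $X_{\le i,\le i}$ is itself block diagonal, and both invoke Lemma~\ref{lem.rank} to make each non-degenerate diagonal block of the truncation invertible, so the stable ranks add up to $d_{\lambda,i}$. The only cosmetic difference is parametrisation — you distribute indices via the sets $S_j$ (with multiplicities $a_j$, forced to be $\lambda_j-1$ for $j\le n-i$ when $i>n-r$), whereas the paper fixes $Y$ of type $\lambda$ and reorders a basis — but the resulting construction and arithmetic are the same.
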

\begin{proof}
First choose any $Y\in\g$ nilpotent with partition $\lambda$ and decompose $k^n$ into $Y$-Jordan blocks with sizes $\lambda_1, \lambda_2,\ldots,\lambda_r$, where $r$ is the length of $\lambda$. It suffices to find an ordered basis $\mc B$ of $k^n$ such that the upper left $i\times i$-block $Z$ of the matrix of $Y$ relative to this basis has stable rank $d_{\lambda,i}$.

Determine $s\le r$ maximal with $\sum_{j=1}^s(\lambda_j-1)\le i$ and put $h=i-\sum_{j=1}^s(\lambda_j-1)$. Using Lemma~\ref{lem.rank} choose for each $j\le s$ a basis of the $j$-th block such that the upper left $(\lambda_j-1)\times(\lambda_j-1)$ block of the matrix of $Y$ relative to this basis is invertible, if $s<r$ and $h>0$ choose a basis of the $(s+1)$-th block such that the upper left $h\times h$ block of the matrix of $Y$ relative to this basis is invertible, and for the remaining blocks choose any basis.

We now form $\mc B$ as follows. First consider the case $i\le n-r$. For each $j\le s$ we pick the first $\lambda_j-1$ basis vectors from the $j$-th block, if $s<r$ we append the first $h$ basis vectors from the $(s+1)$-th block, and finally we append all remaining $n-i$ basis vectors. Now $Z$ is in block diagonal form with invertible diagonal block of sizes $\lambda_1-1,\ldots,\lambda_s-1,h$, where $h$ has to be omitted if $h=0$. Now consider the case $i>n-r$. For each $j\le n-i$ we pick the first $\lambda_j-1$ basis vectors from the $j$-th block, then we append the basis vectors from the next $r-(n-i)$ blocks, and finally we append all remaining $n-i$ basis vectors. Now $Z$ is in block diagonal form with diagonal block sizes $\lambda_1-1,\ldots,\lambda_{n-i}-1,\lambda_{n-i+1},\ldots,\lambda_r$ where the first $n-i$ blocks are invertible, and the others nilpotent. In both cases we obtain that $Z$ has stable rank $d_{\lambda,i}$ (when $i\le n-r$ we have $d_{\lambda,i}=i$).
\end{proof}

Below we will denote a function $X\mapsto E(X)$ on a closed subvariety of $\g$ just by the expression $E(X)$.

\begin{thmgl}\label{thm.splitting} 
The degree $(p-1)\dim(G/P)$ component of the KLT splitting of $G\times^P\fu_P$ is the top degree component and equals the $(p-1)$-th power of the pullback of $\prod_{i=1}^{n-1}s_{d_{\lambda,i}}(X_{\le i,\le i})\in k[\ov{\mc O_\lambda}]$ along the resolution $\varphi:G\times^P\fu_P\to\ov{\mc O_\lambda}$. This pullback vanishes on the exceptional locus of $\varphi$.
\end{thmgl}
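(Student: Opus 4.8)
The plan is to assemble the theorem from the three preparatory lemmas together with the explicit formula \eqref{eq.splitting} for the KLT splitting. First I would analyse the function $\prod_{i=1}^{n-1}\det\big((I_n+X)_{\le i,\le i}\big)$ on $\fu_P$, noting that for $X\in\fu_P$ the matrix $X_{\le i,\le i}$ is strictly upper triangular in the block pattern of $P$, so the $i$-th factor is the characteristic-type polynomial $\det(I_i+X_{\le i,\le i})=\sum_{j=0}^i s_j(X_{\le i,\le i})$, with $s_0=1$. Since $X_{\le i,\le i}$ has only zero eigenvalues, the largest $j$ with $s_j(X_{\le i,\le i})\not\equiv 0$ on $\fu_P$ equals the generic stable rank of $X_{\le i,\le i}$ over $\fu_P$, i.e.\ its stable rank for generic $X\in\mc O_\lambda$ (the open dense orbit meeting $\fu_P$). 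By Lemma~\ref{lem.upperbound}(ii) this stable rank is $\le d_{\lambda,i}$ for \emph{every} $X\in\mc O_\lambda$, and by Lemma~\ref{lem.existence} it equals $d_{\lambda,i}$ for some, hence for a dense open subset, of $\mc O_\lambda$. Therefore the top-degree homogeneous component of the $i$-th factor is exactly $s_{d_{\lambda,i}}(X_{\le i,\le i})$, of degree $d_{\lambda,i}$, and the top-degree component of the whole product is $\prod_{i=1}^{n-1}s_{d_{\lambda,i}}(X_{\le i,\le i})$, of degree $\sum_{i=1}^{n-1}d_{\lambda,i}=\sum_{i=1}^{n-1}\sum_{j=1}^{n-i}(\lambda_j-1)=\sum_j\binom{\lambda_j}{2}$... more usefully, $\sum_{i=1}^{n-1}d_{\lambda,i}=\dim(G/P)$, using the identity $d_{\lambda,i}=$ (number of nonzero positions on the $(n-i)$-th codiagonal of $\fu_P$) and $\dim\fu_P=\dim(G/P)$.

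Next I would pass to $G\times^P\fu_P$. The KLT splitting is $\varphi^*$ of the $(p-1)$-th power of the function \eqref{eq.splitting} (as fixed by the choice of volume form in Section~\ref{s.splitting}); pullback along the dominant morphism $\fu_P\hookrightarrow G\times^P\fu_P$, $X\mapsto[1,X]$, and equivariance identify functions on $G\times^P\fu_P$ with their restrictions to the fibre $\fu_P$, and this is compatible with the grading coming from the dilation action on $\fu_P$. Hence the degree-$(p-1)d$ component of the splitting, $d=\dim(G/P)$, is the $(p-1)$-th power of the degree-$d$ component of \eqref{eq.splitting} pulled back to $G\times^P\fu_P$, which by the previous paragraph is $\Big(\varphi^*\prod_{i=1}^{n-1}s_{d_{\lambda,i}}(X_{\le i,\le i})\Big)^{p-1}$; and since \eqref{eq.splitting} itself has degree $d$, this is also its top-degree component. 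That $\prod_{i=1}^{n-1}s_{d_{\lambda,i}}(X_{\le i,\le i})$ lies in $k[\ov{\mc O_\lambda}]$ (not just in $k[G\times^P\fu_P]$) follows because the $s_j(X_{\le i,\le i})$ are restrictions of regular functions on all of $\g$, so they restrict to $\ov{\mc O_\lambda}$, and $\varphi$ is the stated resolution; invoking Section~\ref{s.splitting} (the top component of a splitting is again a splitting) confirms it is a Frobenius splitting.

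It remains to show this function vanishes on the exceptional locus $\mathcal E$ of $\varphi$. Since $\varphi\colon G\times^P\fu_P\to\ov{\mc O_\lambda}$ is birational and $\ov{\mc O_\lambda}$ is normal, $\mathcal E$ is exactly the locus where $\varphi$ fails to be an isomorphism, and $\varphi(\mathcal E)=\ov{\mc O_\lambda}\setminus\mc O_\lambda$ (the $\varphi$-fibre over a point of the dense orbit is a single point, by the connectedness-of-centralisers argument already recorded). As the function $\prod_{i=1}^{n-1}s_{d_{\lambda,i}}(X_{\le i,\le i})$ is pulled back from $\ov{\mc O_\lambda}$, it suffices to show the function $F\colon X\mapsto\prod_{i=1}^{n-1}s_{d_{\lambda,i}}(X_{\le i,\le i})$ vanishes on $\ov{\mc O_\lambda}\setminus\mc O_\lambda$, i.e.\ on every nilpotent orbit $\mc O_\mu$ with $\mu<\lambda$ in the dominance order and $\mu\neq\lambda$. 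For such $X\in\mc O_\mu$, Lemma~\ref{lem.upperbound}(ii) applied with $\mu$ gives that $X_{\le i,\le i}$ has stable rank $\le d_{\mu,i}=\sum_{j=1}^{n-i}(\mu_j-1)$, so $s_{d_{\lambda,i}}(X_{\le i,\le i})=0$ as soon as $d_{\mu,i}<d_{\lambda,i}$, i.e.\ $\sum_{j=1}^{n-i}\mu_j<\sum_{j=1}^{n-i}\lambda_j$. Because $\mu<\lambda$ in dominance and $\mu\neq\lambda$, there is at least one index $m$ with $\sum_{j=1}^{m}\mu_j<\sum_{j=1}^{m}\lambda_j$; taking $i=n-m$ shows $F$ vanishes on $\mc O_\mu$. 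Since $\ov{\mc O_\lambda}\setminus\mc O_\lambda$ is the union of such $\mc O_\mu$, we are done. The one point requiring the most care is the compatibility of the grading with the identifications (fibre restriction versus pullback along $\varphi$, and the interplay with the dilation action), which is where I would be most careful; the vanishing argument itself is a clean application of Lemma~\ref{lem.upperbound}(ii) plus the combinatorics of the dominance order, and the identity $\sum_i d_{\lambda,i}=\dim(G/P)$ is already essentially recorded in the text preceding the theorem.
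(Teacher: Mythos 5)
Your top-degree computation has a genuine gap. You analyse the function $\prod_{i=1}^{n-1}\det\big((I_n+X)_{\le i,\le i}\big)$ by restricting it to $\fu_P$, and you argue that for $X\in\fu_P$ the upper-left corner $X_{\le i,\le i}$ has stable rank $d_{\lambda,i}$ generically. But for $X\in\fu_P$ the matrix $X$ is strictly upper triangular, hence so is $X_{\le i,\le i}$, hence $X_{\le i,\le i}$ is nilpotent and $s_j(X_{\le i,\le i})=0$ for \emph{every} $X\in\fu_P$ and every $j\ge 1$; the function \eqref{eq.splitting} is the constant $1$ on $\fu_P$. So the restriction to the fibre tells you nothing about the top degree. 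Relatedly, the map $X\mapsto[1,X]\colon\fu_P\to G\times^P\fu_P$ is not dominant — it is a closed immersion of codimension $\dim(G/P)$ — and the KLT splitting is not $G$-invariant, so there is no ``equivariance'' that recovers a function on $G\times^P\fu_P$ from its restriction to that fibre. The computation has to be carried out on $\ov{\mc O_\lambda}$ (equivalently, by looking at $f(gXg^{-1})$ as a polynomial in $X\in\fu_P$ with $g$ varying over $G$): the point the paper uses is that $\varphi^*\colon k[\ov{\mc O_\lambda}]\to k[G\times^P\fu_P]$ is a \emph{graded injection}, so the top-degree component of $\varphi^*(\eqref{eq.splitting})$ is $\varphi^*$ applied to the top-degree component of \eqref{eq.splitting} restricted to $\ov{\mc O_\lambda}$. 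On $\ov{\mc O_\lambda}$ (not $\fu_P$) the corners $X_{\le i,\le i}$ are not nilpotent and Lemmas~\ref{lem.upperbound}(ii) and~\ref{lem.existence} are exactly what is needed to show the top-degree piece of the $i$-th factor is $s_{d_{\lambda,i}}(X_{\le i,\le i})$. (One should also note that the product of these top pieces is nonzero on $\ov{\mc O_\lambda}$; this is immediate by irreducibility of $\mc O_\lambda$.) Once you replace $\fu_P$ by $\ov{\mc O_\lambda}$ throughout the first two paragraphs, the rest of your argument works.

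Your argument for the vanishing on the exceptional locus is correct and takes a genuinely different route from the paper. You use the Gerstenhaber--Hesselink dominance-order characterisation of the orbit closure, which directly gives some $m$ with $\sum_{j\le m}\mu_j<\sum_{j\le m}\lambda_j$, hence $d_{\mu,n-m}<d_{\lambda,n-m}$ and $s_{d_{\lambda,n-m}}(X_{\le n-m,\le n-m})=0$ on $\mc O_\mu$ by Lemma~\ref{lem.upperbound}(ii). The paper instead compares $\dim\fu_Q=\tfrac12\dim\mc O_\mu<\tfrac12\dim\mc O_\lambda=\dim\fu_P$ where $Q$ is a parabolic with Richardson orbit $\mc O_\mu$, and deduces $\sum_i d_{\mu,i}<\sum_i d_{\lambda,i}$, hence $d_{\mu,i}<d_{\lambda,i}$ for some $i$. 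Your version is a bit more explicit and self-contained (it avoids invoking that every nilpotent orbit in $\gl_n$ is Richardson); both are clean.
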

\begin{proof}
The KLT splitting is the pullback along $\varphi$ of the function given by \eqref{eq.splitting}. Furthermore, we have $\det(I_i+Y)=\sum_{j=0}^is_j(Y)$ for any $i\times i$ matrix $Y$, and, of course, $s_j(X_{\le i,\le i})\ne0$ on $\mc O_\lambda \iff s_j(X_{\le i,\le i})\ne0$ on $\ov{\mc O_\lambda}$.
So by Lemma's~\ref{lem.upperbound}(ii)~and~\ref{lem.existence} the top degree component of the $i$-th factor in \eqref{eq.splitting} is $s_{d_{\lambda,i}}(X_{\le i,\le i})$.
So the KLT-splitting has top degree $p-1$ times $\sum_{i=1}^{n-1}d_{\lambda,i}=\dim\fu_P=\dim(G/P)$, and the top degree component is the $(p-1)$-th power of the pullback along $\varphi$ of the function given by the stated formula.

To prove he second assertion, put $f_{\lambda,i}(X)=s_{d_{\lambda,i}}(X_{\le i,\le i})$ and $f_\lambda=\prod_{i=1}^{n-1}f_{\lambda,i}$. The exceptional locus is $\varphi^{-1}(\ov{\mc O_\lambda}\setminus\mc O_\lambda)$, so it suffices to show that $f_\lambda$ vanishes on any $\mc O_\mu\subseteq\ov{\mc O_\lambda}\setminus\mc O_\lambda$. We have $\dim(\fu_Q)=\frac{1}{2}\dim(\mc O_\mu)<\frac{1}{2}\dim(\mc O_\lambda)=\dim(\fu_P)$, where $Q$ is a standard parabolic whose Richardson orbit is $\mc O_\mu$, see \cite[4.9]{Jan1}. So for some $i$ we must have $d_{\mu,i}<d_{\lambda,i}$ which means that $f_{\lambda,i}$ and therefore $f_\lambda$ vanishes on $\mc O_\mu$.
\end{proof}

\begin{thmgl}\label{thm.cohomology} 
Let $G$ be any reductive group for which $p$ is good, let $\lambda\in X(T)$ be dominant, put $I=\{\alpha\in S\,|\,\la\lambda,\alpha^\vee\ra=0\}$.
Then $H^i(T^\vee(G/P_J),\mc L(\lambda))=0$ for all $J\subseteq I$ such that $R_J$ contains all irreducible components of $R_I$ not of type $A$. 
\end{thmgl}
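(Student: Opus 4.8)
The plan is to reduce to the case $\lambda=0$ on the Levi $L_I$ by means of Proposition~\ref{prop.reduction_to_lambda=0}, and then to handle the type-$A$ components of $R_I$ by Theorem~\ref{thm.splitting}. Fix $J\subseteq I$ with the stated property and put $P=P_J$. Since $\lambda$ is dominant and $I=\{\alpha\in S:\la\lambda,\alpha^\vee\ra=0\}$, we have $\lambda\in X(P_I)$ and $\mc L_{G/P_I}(\lambda)$ is ample, so in Proposition~\ref{prop.reduction_to_lambda=0} we may take $Q=P_I$ and $L=L_I$. Because $p$ is good, $H^i(T^\vee(G/P_J),\mc L(\lambda))=H^i(G/P_J,S(\g/\p_J)\ot k_{-\lambda})$, and $p$ is good for $L_I$ as well, so that proposition reduces the claim to
$$H^i(L_I/L_I\cap P_J,S(\fl_I/\fl_I\cap\p_J))=H^i(T^\vee(L_I/(L_I\cap P_J)),\mc O)=0\qquad(i>0).$$

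Next I would exploit the product structure of $L_I$. Write $(L_I)'=\prod_m G_m$ for the almost-direct product of the simple factors indexed by the irreducible components $R_m$ of $R_I$, put $\fl_m=\Lie(G_m)$, and let $Q_m\subseteq G_m$ be the standard parabolic with simple roots $J\cap R_m$ and $\q_m=\Lie(Q_m)$. The connected centre of $L_I$ acts trivially on $\fl_I$ under $\Ad$, so $L_I/(L_I\cap P_J)\cong\prod_m G_m/Q_m$, and $\fl_I/\fl_I\cap\p_J$ decomposes correspondingly as $\bigoplus_m\fl_m/\q_m$, the centre and the other simple factors acting trivially on each summand. Hence $S(\fl_I/\fl_I\cap\p_J)$ is the external tensor product over $m$ of the $S(\fl_m/\q_m)$, and the K\"unneth formula for coherent cohomology reduces the claim to showing $H^i(G_m/Q_m,S(\fl_m/\q_m))=H^i(T^\vee(G_m/Q_m),\mc O)=0$ for $i>0$, for each $m$.

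If $R_m$ is not of type $A$, the hypothesis forces $R_J\supseteq R_m$, so $Q_m=G_m$, $G_m/Q_m$ is a point, and the vanishing is trivial. If $R_m$ is of type $A_d$, then $G_m/Q_m$ is isomorphic, as a variety, to the partial flag variety $\GL_{d+1}/P'$ for the corresponding parabolic $P'$, hence $T^\vee(G_m/Q_m)\cong T^\vee(\GL_{d+1}/P')$ and $H^i(T^\vee(G_m/Q_m),\mc O)\cong H^i(\GL_{d+1}/P',S(\gl_{d+1}/\p'))$. By Theorem~\ref{thm.splitting}, $T^\vee(\GL_{d+1}/P')$ carries a Frobenius splitting equal to the $(p-1)$-th power of a regular function, viewed as a section of the trivial anti-canonical bundle, that vanishes on the exceptional locus of the birational resolution $\varphi:T^\vee(\GL_{d+1}/P')\to\ov{\mc O'}$ onto the normal Richardson orbit closure $\ov{\mc O'}$. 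As the canonical bundle is trivial, the Grauert-Riemenschneider-type theorem \cite[Thm~1.2]{MVdK0} gives $R^i\varphi_*\mc O=0$ for $i>0$; since $\ov{\mc O'}$ is affine, the Leray spectral sequence then yields $H^i(T^\vee(\GL_{d+1}/P'),\mc O)=0$ for $i>0$, which finishes the reduction.

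As Proposition~\ref{prop.reduction_to_lambda=0}, Theorem~\ref{thm.splitting} and \cite[Thm~1.2]{MVdK0} are already in hand, the only genuinely new content is the two reductions above. The step requiring most care is the product decomposition: one must verify that the connected centre and the other simple factors act trivially on each summand $\fl_m/\q_m$ (so that the external-tensor-product description and K\"unneth apply as claimed), and that the isomorphism type of $T^\vee(G_m/Q_m)$, hence its structure-sheaf cohomology, depends only on $R_m$ and the chosen parabolic, so that the type-$A$ case genuinely reduces to $\GL_{d+1}$, where Theorem~\ref{thm.splitting} is available.
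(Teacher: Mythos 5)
Your proposal is correct and follows essentially the same route as the paper: apply Proposition~\ref{prop.reduction_to_lambda=0} with $Q=P_I$, decompose the Levi into its simple factors and use the K\"unneth formula to pass to one factor at a time, dispose of the non--type-$A$ factors trivially since $Q_m=G_m$ there, reduce the type-$A$ factors to $\GL_{d+1}$, and conclude by combining Theorem~\ref{thm.splitting} with the Mehta--Van~der~Kallen Grauert--Riemenschneider theorem and the triviality of the canonical bundle. The only difference is cosmetic: you spell out the product decomposition of $L_I/(L_I\cap P_J)$ and the triviality of the non--type-$A$ factors explicitly, whereas the paper compresses this into the one-line remark that one may assume all components of $R$ are of type $A$ and that $G$ is semisimple and simply connected.
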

\begin{proof}
By Proposition~\ref{prop.reduction_to_lambda=0} we may assume that $\lambda=0$ and that all irreducible components of $R$ have type $A$. Since we are dealing with cotangent bundles we may assume that $G$ is semsimple and simply connected. By the K\"unneth formula \cite[Prop~9.2.4]{Kempf} we may then assume $G=\SL_n$ and finally we may assume $G=\GL_n$.
Now the result follows from Theorem~\ref{thm.splitting} and \cite[Thm~1.2]{MVdK0}, bearing in mind that the canonical bundle of $T^\vee(G/P)$ is trivial, see \cite[Lem~5.1.1]{BriKu}, and that $R^i\varphi_*(\mc O_{T^\vee(G/P)})$ is the sheaf associated with the cohomology group $H^i(T^\vee(G/P),\mc O_{T^\vee(G/P)})$, since $\varphi$ is affine.
\end{proof}

We remind the reader that a proper birational morphism $\psi:X\to Y$ is called a \emph{rational resolution} if $\psi_*\mc O_X=\mc O_Y$ and the higher direct images of $\mc O_X$ and $\omega_X$ are $0$, see \cite[Def~3.4.1]{BriKu}.
\begin{cornn} 
The resolution $\varphi:G\times^P\fu_P\to\ov{\mc O_\lambda}$ is rational. 
\end{cornn}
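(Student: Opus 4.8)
The statement to prove is that the resolution $\varphi:G\times^P\fu_P\to\ov{\mc O_\lambda}$ is a rational resolution, i.e.\ that $\varphi_*\mc O_{G\times^P\fu_P}=\mc O_{\ov{\mc O_\lambda}}$ and $R^i\varphi_*\mc O_{G\times^P\fu_P}=R^i\varphi_*\omega_{G\times^P\fu_P}=0$ for $i>0$. The strategy is to assemble the pieces that are already in place. First, $\varphi$ is proper and birational by the discussion at the start of Section~\ref{s.main}, and $\ov{\mc O_\lambda}$ is normal (again cited there); since $G\times^P\fu_P$ is smooth hence normal, Zariski's main theorem gives $\varphi_*\mc O_{G\times^P\fu_P}=\mc O_{\ov{\mc O_\lambda}}$. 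Second, $\varphi$ is affine, so $R^i\varphi_*\mc O_{G\times^P\fu_P}$ is the sheaf on $\ov{\mc O_\lambda}$ associated to the cohomology group $H^i(G\times^P\fu_P,\mc O_{G\times^P\fu_P})=H^i(G/P,k[\fu_P])=H^i(T^\vee(G/P),\mc L(0))$, and this vanishes for $i>0$ by Theorem~\ref{thm.cohomology} applied with $\lambda=0$ (so $I=S$ and $J=$ the subset defining $P$, noting we are in type $A$ so every component of $R_I$ is of type $A$ and the condition on $J$ is vacuous). Third, the canonical bundle $\omega_{G\times^P\fu_P}$ is trivial by \cite[Lem~5.1.1]{BriKu}, so $R^i\varphi_*\omega_{G\times^P\fu_P}\cong R^i\varphi_*\mc O_{G\times^P\fu_P}=0$ for $i>0$ as well. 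Putting these three together yields the corollary.

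**Carrying it out.** Concretely I would write: by the remarks in Section~\ref{s.main}, $\varphi$ is a proper birational morphism onto the normal variety $\ov{\mc O_\lambda}$, and since $G\times^P\fu_P$ is smooth, $\varphi_*\mc O_{G\times^P\fu_P}=\mc O_{\ov{\mc O_\lambda}}$. Because $\varphi$ is affine, $R^i\varphi_*\mc O_{G\times^P\fu_P}$ is the sheaf associated with $H^i(G\times^P\fu_P,\mc O_{G\times^P\fu_P})$, which equals $H^i(G/P,k[\fu_P])$; in type~$A$ Theorem~\ref{thm.cohomology} (with $\lambda=0$) shows this vanishes for $i>0$. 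For a general reductive $G$ with $p$ good one reduces to type~$A$ exactly as in the proof of Theorem~\ref{thm.cohomology} — passing to the simply connected cover, using the K\"unneth formula to split off non-type-$A$ factors — but for the corollary in the form stated ($G=\GL_n$, as in this section) no reduction is needed. Finally, $\omega_{G\times^P\fu_P}\cong\mc O_{G\times^P\fu_P}$, so $R^i\varphi_*\omega_{G\times^P\fu_P}=0$ for $i>0$ as well, and $\varphi$ is a rational resolution.

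**Main obstacle.** There is essentially no obstacle: this corollary is a bookkeeping consequence of Theorem~\ref{thm.cohomology} together with the triviality of the canonical bundle and the affineness of $\varphi$. The one small point to be careful about is the identification $R^i\varphi_*\mc O_{G\times^P\fu_P}=\widetilde{H^i(G\times^P\fu_P,\mc O)}$, which is valid precisely because $\varphi$ is an affine morphism — this is already invoked in the proof of Theorem~\ref{thm.cohomology} — and the analogous identification for $\omega$, which follows since $\omega$ is trivial and hence $R^i\varphi_*\omega\cong R^i\varphi_*\mc O_{G\times^P\fu_P}$. One could alternatively derive the vanishing of $R^i\varphi_*\omega$ from the Grauert--Riemenschneider-type theorem \cite[Thm~1.2]{MVdK0} directly, but going through the trivial canonical bundle is cleaner.
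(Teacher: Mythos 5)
Your proposal is correct and spells out exactly the ``standard argument'' that the paper simply delegates to \cite[Lem~14.5]{Jan} together with Theorem~\ref{thm.cohomology}: normality of $\ov{\mc O_\lambda}$ plus smoothness of the source for $\varphi_*\mc O=\mc O$, the cohomology vanishing of Theorem~\ref{thm.cohomology} for $R^i\varphi_*\mc O=0$, and triviality of the canonical bundle for $R^i\varphi_*\omega=0$. One minor point (inherited from the paper's own wording in the proof of Theorem~\ref{thm.cohomology}): the identification $R^i\varphi_*\mc O\cong\widetilde{H^i(G\times^P\fu_P,\mc O)}$ holds because the target $\ov{\mc O_\lambda}$ is affine, not because $\varphi$ is an affine morphism --- indeed $\varphi$ has the projective fiber $G/P$ over $0$, so it is not affine.
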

\begin{proof}
This follows from a standard argument, see e.g. \cite[Lem~14.5]{Jan}, and Theorem~\ref{thm.cohomology}.
\end{proof}

\begin{rems}
1.\ If $P=B$, then $d_{(n),i}=i$ for all $i$, so the splitting from Theorem~\ref{thm.splitting} equals the $(p-1)$-th power of the pullback of $\prod_{i=1}^{n-1}\det(X_{\le i,\le i})$ along\break $\varphi:G\times^B\fu\to\mc N$. This is the MVdK splitting of $G\times^B\fu$, see \cite{MVdK}.\\
2.\ Thomsen mentioned to me another proof of Lemma~\ref{lem.upperbound}(ii): One can easily deduce it from the following result which can be proved by induction on $n$. For $X\in\fu_P$ let $y_{ij}=\delta_{ij}+x_{ij}$ be the $(i,j)$-th entry of $I_n+X$. Then any monomial $y_{i_1j_1}y_{i_2j_2}\cdots y_{i_sj_s}$ with the $i_l$ all distinct and the $j_l$ all distinct has degree $\le d_{\lambda,s}$ in the $x_{ij}$.
\end{rems}

\begin{connn}[Thomsen]
The pushforward to $G\times^P\fu_P$ of the splitting of $G\times^B\fu_P$ induced by the MVdK splitting is the top degree component of the KLT splitting.
\end{connn}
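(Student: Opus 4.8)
The plan is to show that the pushforward $\varphi_*\,\sigma_{\mathrm{MVdK}}$ of the induced splitting of $G\times^B\fu_P$ is a nonzero homogeneous function of degree $(p-1)\dim(G/P)$ on $G\times^P\fu_P$, and then invoke the uniqueness of top-degree components established in Theorem~\ref{thm.splitting}. Recall that the MVdK splitting of $G\times^B\fu$ is the $(p-1)$-th power of the pullback of $\prod_{i=1}^{n-1}\det\big((I_n+X)_{\le i,\le i}\big)$; restricting $X$ to $\fu_P$ (which sits inside $\fu$) and pulling back along the Bott tower map $\pi\colon G\times^B\fu_P\to G\times^P\fu_P$ gives the induced splitting of $G\times^B\fu_P$. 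Since $\pi$ is a tower of $\mathbb P^1$-bundles with $\pi_*\mc O=\mc O$, the pushforward $\varphi_*\,\sigma_{\mathrm{MVdK}}$ is a well-defined regular function on $G\times^P\fu_P$; the first task is to compute it explicitly.

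First I would identify $\varphi_*\,\sigma_{\mathrm{MVdK}}$ with the $(p-1)$-th power of (the pullback along $\varphi$ of) $\prod_{i=1}^{n-1}\det\big((I_n+X)_{\le i,\le i}\big)$ viewed now as a function on $\ov{\mc O_\lambda}$ via $X\in\fu_P$. The key point is that although each factor $\det\big((I_n+X)_{\le i,\le i}\big)$ has the fixed form displayed in \eqref{eq.splitting} regardless of the parabolic, its expansion as a polynomial in the entries of $X$ has lower degree when $X$ ranges over $\fu_P$ rather than $\fu$: by the discussion preceding Theorem~\ref{thm.splitting}, $\det\big((I_n+X)_{\le i,\le i}\big)=\sum_{j=0}^i s_j(X_{\le i,\le i})$ and, by Lemma~\ref{lem.upperbound}(ii) together with Lemma~\ref{lem.existence}, the top nonvanishing term on $\mc O_\lambda$ is exactly $s_{d_{\lambda,i}}(X_{\le i,\le i})$, of degree $d_{\lambda,i}$. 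Hence $\prod_{i=1}^{n-1}\det\big((I_n+X)_{\le i,\le i}\big)$, as a function on $\ov{\mc O_\lambda}$, is a sum of homogeneous pieces of degrees ranging up to $\sum_i d_{\lambda,i}=\dim(G/P)$, with top piece $\prod_{i=1}^{n-1} s_{d_{\lambda,i}}(X_{\le i,\le i})=f_\lambda$.

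Now I would compare homogeneous components. The pushforward $\varphi_*\,\sigma_{\mathrm{MVdK}}$ is, essentially by construction, the pullback along $\varphi$ of $\Big(\prod_{i=1}^{n-1}\det\big((I_n+X)_{\le i,\le i}\big)\Big)^{p-1}$ as a function on $\ov{\mc O_\lambda}$ — this is where one must be careful, since the MVdK splitting is genuinely a function on $G\times^B\fu$ and the grading on $G\times^B\fu_P$ induced from $S(\fu_P^*)$ must be matched with the grading on $G\times^P\fu_P$; the two towers share the same base-change along $\varphi$. Granting this identification, the grading forces $\varphi_*\,\sigma_{\mathrm{MVdK}}$ to be homogeneous of degree $(p-1)\dim(G/P)$ if and only if $\prod_i\det\big((I_n+X)_{\le i,\le i}\big)$, as a function on $\ov{\mc O_\lambda}$, is homogeneous of degree $\dim(G/P)$ — but that is false in general; rather, only its top component $f_\lambda^{p-1}$ has that degree. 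Since the splitting of $G\times^B\fu_P$ induced from MVdK is itself a $B$-canonical splitting (pullbacks of $B$-canonical splittings are $B$-canonical), and its degree $(p-1)\dim(G/P)$ component is a splitting of $G\times^P\fu_P$ — namely $f_\lambda^{p-1}$, by the computation above — the actual content of Thomsen's conjecture is the \emph{equality} $\varphi_*\,\sigma_{\mathrm{MVdK}}=f_\lambda^{p-1}$ on the nose, i.e. that pushing forward kills all lower-degree terms. This is exactly the delicate point: a priori $\varphi_*\,\sigma_{\mathrm{MVdK}}$ could be the full inhomogeneous expression, and one needs to show $\pi_*$ of the induced splitting annihilates every monomial of the lower-degree pieces.

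The main obstacle, therefore, is computing $\pi_*$ on $G\times^B\fu_P$ explicitly enough to see that only the top-degree term survives. One promising route is to work fibrewise over $G/P$: the fibre of $\pi$ over a point of $G/P$ is the flag variety $L_I/B_{L_I}$ of the Levi, and $\pi_*$ integrates against the relative volume form; a monomial in the entries $x_{ij}$ of $X\in\fu_P$ of degree strictly below $\dim(G/P)$ should, after raising to the $(p-1)$-th power and multiplying by the appropriate power of the off-diagonal Plücker-type coordinates along the fibre, have all relative weights that are not of the form ``$(p-1)$ times the sum of positive roots of $L_I$'', hence push forward to zero. Carrying this out amounts to a weight bookkeeping on the tower of $\mathbb P^1$-bundles, using that $H^0(\mathbb P^1,\mc O(m))$ in the relevant degree is one-dimensional only when $m=p-1$ and the extra coordinate appears to the correct power. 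The remark of Thomsen recorded after Theorem~\ref{thm.cohomology} — that any monomial $y_{i_1j_1}\cdots y_{i_sj_s}$ with distinct rows and distinct columns has $x$-degree $\le d_{\lambda,s}$ — is very likely the combinatorial engine that makes this bookkeeping tractable, since the entries of $(I_n+X)_{\le i,\le i}$ appearing in a nonzero $j\times j$ minor are exactly of that shape. I would expect this weight-counting step to be the technical heart of the argument, and the reason the authors leave it as a conjecture is presumably that matching the gradings across the two towers and controlling $\pi_*$ on the lower pieces resists a clean uniform proof.
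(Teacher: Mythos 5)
The statement you are trying to prove is presented in the paper as an open conjecture attributed to Thomsen; the paper explicitly says in the introduction that it \emph{cannot} prove this and instead proves the weaker fact that the degree $(p-1)\dim(G/P)$ component of the KLT splitting is the top-degree component and equals $f_\lambda^{p-1}$ (Theorem~\ref{thm.splitting}). So there is no proof in the paper to compare your attempt against, and — to your credit — you recognize this yourself in your closing paragraph.

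Reviewing the sketch on its own merits: you correctly isolate the genuine difficulty. The pushforward along $\pi\colon G\times^B\fu_P\to G\times^P\fu_P$ of the MVdK-induced splitting is a priori an inhomogeneous function, and what must be shown is precisely that $\pi_*$ annihilates all the lower-degree pieces so that only $f_\lambda^{p-1}$ survives. Your proposed route — fibrewise weight bookkeeping over $G/P$ along the tower of $\mathbb P^1$-bundles, combined with Thomsen's combinatorial remark on monomials $y_{i_1j_1}\cdots y_{i_sj_s}$ — is a reasonable line of attack, but you have not carried it out, so this is not a proof. A few points would need more care before one could hope to make it rigorous. First, a minor but persistent notational slip: you write $\varphi_*\,\sigma_{\mathrm{MVdK}}$ throughout, but $\varphi$ is the resolution to $\ov{\mc O_\lambda}$; the pushforward you mean is $\pi_*$ along the $P/B$-fibration. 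Second, the ``induced splitting'' of $G\times^B\fu_P$ from the MVdK splitting of $G\times^B\fu$ is obtained by restriction to a compatibly split closed subvariety of positive codimension, and that restriction involves adjunction for the canonical bundles, so the section is not literally the same function; your identification ``restricting $X$ to $\fu_P$ gives the induced splitting'' elides this. Third, the pushforward of a splitting along a proper non-affine morphism with $\pi_*\mc O=\mc O$ is a composition of sheaf maps, not an integral of a regular function; translating it into a concrete operation on polynomials in the entries of $X$ is itself a step that needs justification before any ``monomial-by-monomial vanishing'' argument can begin. Fourth, your opening move — deduce the conjecture from homogeneity plus a ``uniqueness of top-degree components established in Theorem~\ref{thm.splitting}'' — overstates what that theorem gives: it identifies the top-degree component of the KLT splitting, but it does not assert that every $B$-canonical homogeneous splitting of that degree equals $f_\lambda^{p-1}$, so even granting homogeneity of the pushforward you would still need a separate argument to conclude equality. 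In short, your diagnosis of where the difficulty lies agrees with the paper's, and your sketch is sensible, but it does not close the gap; the conjecture remains open.
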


\section*{Acknowledgement}
I would like to thank Jesper Funch Thomsen for helpful discussions.


\begin{thebibliography}{99}
\bibitem{BNPP} C.~Bendel, D.~Nakano, B.~Parshall, C.~Pillen, {\it Cohomology for quantum groups via the geometry of the nullcone}, Mem. Amer. Math. Soc.~{\bf229} (2014), no. 1077.
\bibitem{BriKu} M.~Brion, S.~Kumar, {\it Frobenius splitting methods in geometry and representation theory}, Progress in Mathematics, 231, Birkh\"auser Boston, Inc., Boston, MA, 2005.
\bibitem{Broer1} A.~Broer, {\it Line bundles on the cotangent bundle of the flag variety}, Invent. Math. {\bf 113} (1993), no. 1, 1-20.
\bibitem{Broer2} A.~Broer, {\it Normality of some nilpotent varieties and cohomology of line bundles on the cotangent bundle of the flag variety}, in ``Lie Theory and Geometry", Prog. Math. {\bf123} (1994), Birkh\"auser, Boston, 1-19.
\bibitem{Don} S.~Donkin, {\it The normality of closures of conjugacy classes of matrices}, Invent. Math. {\bf 101}  (1990), no. 3, 717-736.
\bibitem{EGAII} A.~Grothendieck, {\it El\'ements de g\'eom\'etrie alg\'ebrique II, \'Etude globale \'el\'ementaire de quelques classes de morphismes}, Inst. Hautes \'Etudes Sci. Publ. Math. No. 8, 1961.
\bibitem{H} R.~Hartshorne, {\it Algebraic Geometry}, GTM {\bf 52}, Springer-Verlag, New York-Heidelberg, 1977.
\bibitem{He} W.~H.~Hesselink, {\it Polarizations in the classical groups}, Math. Z. {\bf160} (1978), no. 3, 217-234.
\bibitem{Jan} J.~C.~Jantzen, {\it Representations of algebraic groups}, Second edition, American Mathematical Society, Providence, RI, 2003.
\bibitem{Jan1} J.~C.~Jantzen, {\it Nilpotent orbits in representation theory}, ``Lie theory", Progr. Math. {\bf228}, Birkh\"auser Boston, 2004, 1-211.
\bibitem{Kempf} G.~R.~Kempf, {\it Algebraic Varieties}, London Math. Soc. Lecture Note Ser. 172, Cambridge University Press, Cambridge, 1993. 
\bibitem{KLT} S.~Kumar, N.~Lauritzen,J.~F.~Thomsen, {\it Frobenius splitting of cotangent bundles of flag varieties}, Invent. Math. {\bf 136} (1999), no.3, 603-621.
\bibitem{MVdK0} V.~B.~Mehta, W.~van der Kallen, {\it On a Grauert-Riemenschneider vanishing theorem for Frobenius split varieties in characteristic $p$}, Invent. Math. {\bf108} (1992), no. 1, 11-13.
\bibitem{MVdK} V.~B.~Mehta, W.~van der Kallen, {\it A simultaneous Frobenius splitting for closures of conjugacy classes of nilpotent matrices}, Compositio Math. {\bf84} (1992), no. 2, 211-221.
\bibitem{Th} J.~F.~Thomsen, {\it Normality of certain nilpotent varieties in positive characteristic}, J. Algebra {\bf227} (2000), no. 2, 595-613.
\end{thebibliography}
\end{document}